\pgfplotsset{compat=1.16}
\theoremstyle{theorem}
\newtheorem{theorem}{Theorem}
\theoremstyle{definition}
\newtheorem{definition}{Definition}
\newtheorem{assumption}{Assumption}
\theoremstyle{remark}
\newtheorem{remark}{Remark}
\newcommand{\R}{\mathbb{R}}
\newcommand{\N}{\mathsf{N}}
\newcommand{\K}{\mathsf{K}}
\newcommand{\U}{\mathsf{U}}
\newcommand{\E}{\mathbb{E}}
\newcommand{\bu}{\mathsf{u}}
\newcommand{\col}[1]{\mathrm{col}(#1)}  
\newcommand{\row}[1]{\mathrm{row}(#1)}  
\def\BibTeX{{\rm B\kern-.05em{\sc i\kern-.025em b}\kern-.08em
    T\kern-.1667em\lower.7ex\hbox{E}\kern-.125emX}}
\begin{document}
%%%%%%%%%%%%%%%%%%%%%%%%%%%%%%%%%%%%%%%%%%%%%%%%%%%%%%%%%%%%%%%%%%%%%%%%%%%%%%%%%

\title{Linear-quadratic mean-field-type difference games with coupled affine inequality constraints}
\author{Partha Sarathi Mohapatra and Puduru Viswanadha Reddy, \IEEEmembership{Member, IEEE} 
		\thanks{P. S. Mohapatra and P. V. Reddy are with the Department of Electrical Engineering, Indian Institute of Technology-Madras, Chennai, 600036, India.
		{(e-mail: ps\textunderscore mohapatra@outlook.com, vishwa@ee.iitm.ac.in)}}} 	
	\date{ }
	\maketitle
        \thispagestyle{headings}
	
%%%%%
\begin{abstract}
	 In this letter, we study a class of linear-quadratic mean-field-type difference games with coupled affine inequality constraints. We show that the mean-field-type equilibrium  
	 can be characterized by the existence of a multiplier process which satisfies
	 some implicit complementarity conditions. Further, we show that the equilibrium strategies can be computed by reformulating these conditions as a single large-scale linear complementarity problem. 
	 We illustrate our results with an energy storage problem arising in the management of microgrids.
\end{abstract}
	\begin{IEEEkeywords}
      Mean-field-type difference games; coupled inequality constraints; mean-field-type generalized Nash equilibrium; linear complementarity problem. 
    \end{IEEEkeywords}
%%%%%%%%%%
%%%%%%%%%%

\section{Introduction}
Stochastic dynamic games offer a mathematical framework for modeling dynamic decision-making scenarios that involve multiple players interacting in uncertain environments. Mean-field-type dynamic games (MFTDGs) are a specific class of stochastic games that allow for the inclusion of not just the state and control terms, but also their distributions in the objective functionals and state dynamics \cite{Tembine:17a,Barreiro:21}. As a result, MFTDGs, when the mean and variance terms are considered, are related to the mean-variance paradigm developed by H. Markowitz \cite{Markowitz:52}. MFTDGs differ from mean-field games \cite{Caines:18} by accounting for inherent heterogeneities and finite decision-makers, unlike the approximation provided by mean-field games for problems with many symmetric players. MFTDGs have been increasingly utilized to model real-world engineering problems arising in water networks \cite{BarreiroA:19},  smart-grids\cite{Djehiche:18}  and pedestrian flow \cite{Aurell:18}; see \cite{Barreiro:21} and \cite{Bensoussan1:13} for a detailed coverage.

MFTDGs have been solved using various approaches in the literature, including the stochastic maximum principle \cite{Djehiche:15}, dynamic programming \cite{Pham:16}, and direct method \cite{Duncan:18, Barreiro:B19}. In \cite{Gomez:19a}, the authors studied MFTDGs involving equality constraints on the control variables.
Multi-agent decision problems in engineering and economics require incorporating inequality constraints on state and control variables like capacity, saturation, and budget constraints. The dynamic nature of these constraints poses technical challenges in characterizing admissible controls and establishing solvability conditions for equilibria, distinguishing them from unconstrained counterparts. Despite the significance of these challenges, the literature on MFTDGs involving inequality constraints is scarce, to the best of our knowledge.

This letter proposes a solution for linear-quadratic MFTDGs with inequality constraints. We focus on a discrete-time setting with finite horizon, scalar state dynamics, and quadratic objectives. Our approach incorporates coupled affine inequality constraints on the mean values of state and control variables. Using the direct method, also known as completion of squares, we establish a connection between the existence of a solution for these MFTDGs and the existence of a multiplier process satisfying implicit complementarity conditions. By leveraging an approach similar to \cite{Reddy:15}, we transform these existence conditions into the solvability of a single large-scale linear complementarity problem, thereby providing a computational method for solving these games. The proposed approach can be easily extended to matrix-valued settings.

The letter is organized as follows. In Section \ref{sec:Preliminaries}, we introduce a class of MFTDGs with coupled affine inequality constraints. In Section \ref{sec:MFTNE}, we provide conditions for the existence of  an  equilibrium in these games. In Section \ref{sec:Solvability}, we present an approach for reformulating these conditions as a linear complementarity problem. In Section  \ref{sec:Numerical}, we illustrate our method with numerical simulations and finally   Section \ref{sec:Conclusion} concludes.
\subsubsection*{Notation}  We denote  the transpose of any vector $a$ or matrix $A$ by $a^{\prime}$ and $A^{\prime}$ respectively. The   identity matrix and the matrix of zeros are represented by $\mathbf{I}$ and $\mathbf{0}$    respectively, with appropriate dimensions;  unless specified, is determined from the context. $\E[x]$ denotes the expected value of $x$. Let $A \in \R^{n\times n}$ be partitioned as $n = n_1+ \cdots+n_K$. We represent $[A]_{ij}$ as the $n_i \times n_j$ sub-matrix associated with indices $n_i$ (row) and $n_j$ (column). We denote the column vector $[v_1^{\prime}, \cdots, v_n^{\prime}]^{\prime}$  by $\col{v_k}_{k=1}^n$ and the row vector $[v_1~ \cdots~ v_n]$ by   $\row{v_k}_{k=1}^n$. The block diagonal matrix obtained by taking the matrices $M_1, \cdots, M_K$ as diagonal elements in this sequence, is   represented by $\oplus_{k=1}^{K}M_k$. We represent the Kronecker product operation by $\otimes$. We call two vectors $x, y \in \R^n$ complementary if $x \geq 0,~ y \geq 0$ and $x^{\prime}y=0$, and we compactly denote these conditions by $0 \leq x \perp y \geq 0$.  
%%%%%

%%%%

\section{Preliminaries}\label{sec:Preliminaries}
In this section we introduce  a class of $N$-player scalar finite-horizon mean-field-type difference game with inequality constraints (MFTDGC).  We denote the set of players by $\N = \{1, 2, \cdots, N\}$, the set of time instants or decision stages by $\K = \{0, 1, ..., K\}$. We define  the following two sets as $\K_l:=\K\setminus \{K\}$ and $\K_r:=\K\setminus \{0\}$.  At each time instant $k \in \K_l$, each player $i \in \N$ chooses an action $u^i_k\in \R $ and influences the evolution of state variable $x_k\in \R$ according to the following discrete-time linear dynamics 
\begin{subequations}\label{eq:MFgame}
	\begin{align}
		x_{k+1} &= a_k x_k+\bar{a}_k \E[x_k]+\sum_{i\in\N}(b^i_k u_k^i+\bar{b}^i_k \E[u_k^i])+c_k+\sigma_kw_k, \label{eq:MFstate}
	\end{align}
	where the initial state $x_0$ is a scalar random variable  with known finite mean and known finite variance, and $a_k$, $\bar{a}_k$, $b_k^i$, $\bar{b}_k^i$, $c_k$, $\sigma_k\in\R,~ i \in \N$. $w_k\in\R$ denotes a stochastic disturbance with zero-mean and finite variance. We assume that the decisions of each player $i\in\N$ additionally satisfy the following mixed affine coupled inequality constraints 
	\begin{align}
		\bar{m}_k^i\E[x_k]+\sum_{j\in\N}\bar{n}^{ij}_k \E[u_k^j]+p_k^i \geq 0, \label{eq:constraints}
	\end{align}
	where $\bar{m}_k^i$, $\bar{n}_k^{ij}$, $p_k^i\in \R^{s_i}$, $k\in \K_l$. We denote the set of players excluding the player $i$ by $-i:=N\setminus \{i\}$. At any instant $k\in \K_l$ the collection of actions of all players excluding player $i$ is denoted by $u^{-i}_k:=\col{u_k^1,\cdots,u_k^{i-1},u_k^{i+1},\cdots,u_k^N}$. The profile of actions, also referred to as a strategy, of player $i\in \N$ is denoted by $\bu^{i}:=\col{u_k^{i}}_{k=0}^{K-1}$, and the strategy  of all players except player $i$ is denoted by $\bu^{-i}:=\col{u_k^{-i}}_{k=0}^{K-1}$. Each player $i\in\N$ while choosing their actions seeks to minimize the following interdependent stage additive cost functional
	\begin{align}\label{eq:objective1}
		&J^i(\bu^i, \bu^{-i}) =\tfrac{1}{2}
		q_K^i(x_K)^2+\tfrac{1}{2}\bar{q}_K^i\E[x_K]^2+\tfrac{1}{2}\sum_{k\in\K_l}\big(q_k^i(x_k)^2\nonumber\\
		&\hspace{0.2in}+\bar{q}_k^i\E[x_k]^2\big)+\tfrac{1}{2}\sum_{k\in\K_l}\sum_{j\in\N}\big(r_k^{ij}(u_k^j)^2+\bar{r}_k^{ij}\E[u_k^j]^2\big),
	\end{align}
\end{subequations}
where  $q_k^i,\bar{q}_k^i\in\R$, $k\in\K$ and $r_k^{ij},\bar{r}_k^{ij}\in\R$, $k\in\K_l$.  
\begin{remark} From \eqref{eq:MFstate}, the expected state dynamics is given by
	\begin{align}
		\E[x_{k+1}] &= (a_k+\bar{a}_k) \E[x_k]+\sum_{i\in\N}(b_k^i+\bar{b}^i_k) \E[u_k^i]+c_k. \label{eq:MFPmeanstate1}
	\end{align}
	As $\E[\E[x_k](x_k-\E[x_k])]=0$ and $\E[\E[u_k^j](u_k^j-\E[u_k^j])]=0$, the expected objective can also be represented with mean and variance terms of state and control variables as follows
	\begin{align}
		&\E[J^i(\bu^i, \bu^{-i})] =
		\tfrac{1}{2}q_K^i\E[\big(x_K-\E[x_K]\big)^2]+\tfrac{1}{2}(q_K^i+\bar{q}_K^i)\E[x_K]^2\nonumber\\
		&\quad+\tfrac{1}{2}\sum_{k\in\K_l}\big(q_k^i\E[(x_k-\E[x_k])^2]+(q_k^i+\bar{q}_k^i)\E[x_k]^2\big)\nonumber\\
		&\quad+\tfrac{1}{2}\sum_{k\in\K_l}\sum_{j\in\N}\big(r_k^{ij}\E[(u_k^j-\E[u_k^j])^2]+(r_k^{ij}+\bar{r}_k^{ij})\E[u_k^j]^2\big).\label{eq:meanobjective}
	\end{align} 
\end{remark}
The constraints  given by \eqref{eq:constraints} are coupled i.e., at every stage $k\in \K_l$, the control actions $u_k^{-i}$ of players in $-i$ impose a restriction   on   player $i$'s control action $u_k^i$. Collecting  the constraints of all the players, and by eliminating the expectation of state variable using \eqref{eq:MFPmeanstate1}, we get  
\begin{multline}
	\tilde{\mathsf{M}}_k \big((a_{k-1}+\bar{a}_{k-1})\cdots (a_{0}+\bar{a}_{0}) \E[x_0]+(a_{k-1}+\bar{a}_{k-1})\cdots \\
	\times(a_{1}+\bar{a}_{1}) \bar{\mathsf{B}}_{0}\E[\bu_{0}]+\cdots+ (a_{k-1}+\bar{a}_{k-1})\bar{\mathsf{B}}_{k-2}\E[\bu_{k-2}]\\
	+ \bar{\mathsf{B}}_{k-1}\E[\bu_{k-1}]\big)
	+\bar{\mathsf{N}}_k \E[\bu_k]+\bm{p}_k \geq 0,\label{eq:vectorC1}
\end{multline}
where $\tilde{\mathsf{M}}_k:=\col{\bar{m}_k^i}_{i=1}^{N}$,  $\bar{\mathsf{N}}_k:=\col{\row{\bar{n}_k^{ij}}_{j=1}^{N}}_{i=1}^{N}$, $\bar{\mathsf{B}}_k:=\row{b_k^i+\bar{b}_k^i}_{i=1}^{N}$, $\E[\bu_k]=\col{\E[u_k^i]}_{i=1}^{N}$ and $\bm{p}_k=\col{p_k^i}_{i=1}^N$. The joint feasible strategy space of the players is given by
\begin{align}
	R(\E[x_0]):=\{&(\bu^{i}, \bu^{-i})\in \R^{KN}:\eqref{eq:vectorC1}~\textrm{holds } \forall k \in \K_l\}.
	\label{eq:CConstraint}
\end{align}  
Using \eqref{eq:CConstraint}, the admissible strategy space of player $i\in\N$ for  a given $\E[x_0]\in \mathbb R$ and $\bu^{-i}$ is given by 
\begin{align}
	\U^i(\bu^{-i}):=\{\bu^{i}\in\R^{K}:(\bu^{i}, \bu^{-i}) \in R(\E[x_0])\}.\label{eq:AdmissibleSet}
\end{align}
Next, we have the following assumption.
\begin{assumption}\label{ass:GameAssumption}
	\begin{enumerate}[label = (\roman*)]
		\item \label{item:1} For a given $\E[x_0]\in \mathbb R$, the joint admissible strategy set $R(\E[x_0])\subseteq \mathbb R^{KN}$ is non-empty.
		\item \label{item:2} All the elements of the vector $\bar{n}^{ii}_k\in \mathbb{R}^{s_i}$ are non-zero for all $k\in\K_l$ and $i\in\N$.
		\item \label{item:3} For each player $i\in\N$, $q_k^i$, $q_k^i+\bar{q}_k^i \geq 0$, $k\in\K$ and $r_k^{ii}$, $r_k^{ii}+\bar{r}_k^{ii} > 0$, $k\in\K_{l}$.
	\end{enumerate}
\end{assumption}
Item \ref{item:1} is required to guarantee the existence of a solution of  \eqref{eq:MFstate} satisfying \eqref{eq:constraints}, for a given  $\E[x_0]\in\R $. Item \ref{item:2} is required to satisfy the constraint qualification conditions.
Item \ref{item:3} is a technical assumption which can be relaxed; see Remark \ref{rem:relaxedCond}.

The non-cooperative outcome, that is, mean-field-type Nash equilibrium associated with     MFTDGC, described by \eqref{eq:MFgame}, is defined as follows.
\begin{definition}\label{def:MFTNEdef} 
	For a given $\E[x_0]\in\R$, an admissible strategy profile $( \bu^{i\star}, \bu^{-i\star}) \in R(\E[x_0])$ is a mean-field-type generalized Nash equilibrium (MFTGNE) for   MFTDGC, if for each player $i\in \N$ the following condition holds
	\begin{align}
		\E[J_i( \bu^{i\star}, \bu^{-i\star})] \leq \E[J_i(\bu^i, \bu^{-i\star})],~\forall \bu^i\in \U^i(\bu^{-i\star}).\label{eq:MFTNEdef}
	\end{align} 
\end{definition}
In this letter, we seek to obtain conditions for the existence of MFTGNE for MFTDGC.
%%%%%%%%%%%
%%%%%%%%%%%
%%%%%%%%%%%%%%
\section{Main result}\label{sec:MFTNE} 
In this section, we present a characterization of MFTGNE for MFTDG. To this end, we employ the direct method, which involves a five-step procedure for finding the solution.
%%%
\begin{theorem}\label{thm:MFT_NE}  Let Assumption \ref{ass:GameAssumption} holds. Assume  there exist a multiplier process $\{\mu_k^{i\star}\in \R^{s_i},~i\in \N,~ k\in\K_l\}$ satisfying the following complementarity conditions
\begin{align}
    0\leq  \big(\bar{m}_k^i+\sum_{j\in \N}\bar{n}_k^{ij}\delta_k^j\big)\E[x_k^{\star}]+\sum_{j\in \N}\bar{n}_k^{ij}\bar{\delta}_k^j+p_k^i \perp \mu_k^{i\star} \geq 0,\label{eq:KKTcomp}
\end{align}
where $\{x_k^{\star}, ~k\in\K_l\}$ evolves as follows
\begin{subequations}\label{eq:Eqstate} 
\begin{align}
    &x_{k+1}^{\star}-\E[x_{k+1}^{\star}]=\mathsf{A}_k(x_k^{\star}-\E[x_k^{\star}])+\sigma_k w_k,~x_0^{\star}=x_0,\label{eq:equistatediff}\\
    &\E[x_{k+1}^{\star}] = \bar{\mathsf{A}}_k\E[x_k^{\star}]+\big(\sum_{j\in\N}(b_k^j+\bar{b}^j_k) \bar{\delta}_k^j+c_k\big),\label{eq:equistate}
\end{align}
\end{subequations}
with $\mathsf{A}_k:=a_k+\sum_{j\in\N}b^j_k\eta_k^j$, $\bar{\mathsf{A}}_k:=\big(a_k+\bar{a}_k+\sum_{j\in\N}(b_k^j+\bar{b}^j_k)\delta_k^j \big)$ and for each $i\in\N$, $\{  \eta_k^i ,  \delta_k^i ,\bar{\delta}_k^i,~k\in \K_l \}$ satisfy the following algebraic equations
\begin{subequations}\label{eq:coefficients}
    \begin{align}
        &r_k^{ii}\eta_k^i+b_k^i\alpha_{k+1}^i\sum_{j\in \N}b^j_k\eta_k^j+b^i_k\alpha_{k+1}^i a_k =0,\\
        &(r_k^{ii}+\bar{r}_k^{ii})\delta_k^i+(b_k^i+\bar{b}^i_k)\bar{\alpha}_{k+1}^i\sum_{j\in \N}(b_k^j+\bar{b}^j_k)\delta_k^j\nonumber\\
        &\qquad \qquad+(b_k^i+\bar{b}^i_k)\bar{\alpha}_{k+1}^i(a_k+\bar{a}_k)=0,\\
        &(r_k^{ii}+\bar{r}_k^{ii})\bar{\delta}_k^i+(b_k^i+\bar{b}^i_k)\bar{\alpha}_{k+1}^i\sum_{j\in \N}(b_k^j+\bar{b}^j_k)\bar{\delta}_k^j\nonumber\\
        &\qquad \qquad+(b_k^i+\bar{b}^i_k)\big(\bar{\alpha}_{k+1}^ic_k+\beta_{k+1}^i\big)-\bar{n}_k^{ii}{}^{\prime}\mu_k^{i\star}=0, \label{eq:bdeltaeq}
    \end{align}
\end{subequations}
where $\alpha_k^i$, $\bar{\alpha}_k^i$ and $\beta_k^i$ for $k\in \K_l$ are obtained by solving the following backward difference equations
\begin{subequations}\label{eq:BackwardRecursive}
    \begin{align}
      \alpha_{k}^i&=\alpha_{k+1}^i(\mathsf{A}_k)^2+\sum_{j\in \N}r_k^{ij}(\eta_k^j)^2+q_k^i,\label{eq:alpharecursive}\\
      \bar{\alpha}_{k}^i&=\bar{\alpha}_{k+1}^i(\bar{\mathsf{A}}_k)^2+\sum_{j\in \N}(r_k^{ij}+\bar{r}_k^{ij})(\delta_k^j)^2+(q_k^i+\bar{q}_k^i),\label{eq:baralpharecursive}\\
      \beta_k^i&=\bar{\mathsf{A}}_k\beta_{k+1}^i+\sum_{j\in -i}\big((r_k^{ij}+\bar{r}_k^{ij})\delta_k^j+\bar{\mathsf{A}}_k\bar{\alpha}_{k+1}^i(b_k^j+\bar{b}^j_k)\big)\bar{\delta}_k^j\nonumber\\
      &~~-\big(\bar{m}_k^i+\sum_{j\in\N}\bar{n}_k^{ij}\delta_k^j\big)^{\prime}\mu_k^{i\star}+\bar{\mathsf{A}}_k\bar{\alpha}_{k+1}^ic_k,\label{eq:Betarecursive}
    \end{align}
\end{subequations}
with boundary conditions $\alpha_K^i=q_K^i$, $\bar{\alpha}_K^i=q_K^i+\bar{q}_K^i$ and $\beta_K^i=0$. Then, the MFTGNE strategy of each player $i\in\N$ is given by
\begin{subequations}\label{eq:MFT_NE}
    \begin{align}
        u_k^{i\star}-\E[u_k^{i\star}]&=\eta_k^i(x_k^{\star}-\E[x_k^{\star}]),\\
    \E[u_k^{i\star}]&=\delta_k^i\E[x_k^{\star}]+\bar{\delta}_k^i.
    \end{align}
\end{subequations}  
Furthermore, the expected equilibrium cost of player $i\in \N$ is given by
$\E[J^i(\bu^{i\star}, \bu^{-i\star})] =\tfrac{1}{2}\alpha_0^i\E[(x_0-\E[x_0])^2]+\tfrac{1}{2}\bar{\alpha}_0^i\E[x_0]^2+\beta_0^i\E[x_0]+\gamma_0^i$, 
where $\gamma_k^i$ for $k\in\K_l$ is obtained from the following backward difference equation with boundary condition $\gamma_K^i=0$.
\begin{align}
    \gamma_k^i&=\gamma_{k+1}^i-\tfrac{1}{2}\beta_{k+1}^i(b_k^i+\bar{b}^i_k) \bar{\delta}_k^i+\tfrac{1}{2}\sum_{j\in -i}(r_k^{ij}+\bar{r}_k^{ij})(\bar{\delta}_k^j)^2\nonumber\\
      &\quad+\tfrac{1}{2}\big(\sum_{j\in \N}(b_k^j+\bar{b}^j_k) \bar{\delta}_k^j+c_k\big)\big(\bar{\alpha}_{k+1}^i\big(\sum_{j\in-i}(b_k^j+\bar{b}^j_k) \bar{\delta}_k^j+c_k\big)\nonumber\\
      &\quad+2\beta_{k+1}^i\big)-\tfrac{1}{2}{\mu_k^{i\star}}^{\prime}\big(2\sum_{j\in \N}\bar{n}_k^{ij}\bar{\delta}_k^j-\bar{n}_k^{ii}\bar{\delta}_k^i+2p_k^i\big)\nonumber\\
      &\quad+\tfrac{1}{2}\alpha_{k+1}^i(\sigma_k)^2 \E[(w_k)^2].\label{eq:gammarecursive} 
\end{align}
\end{theorem}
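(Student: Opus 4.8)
The plan is to establish the Nash inequality \eqref{eq:MFTNEdef} for each player $i$ by the direct method (completion of squares), exploiting the fact that, by Assumption \ref{ass:GameAssumption}\ref{item:3}, player $i$'s constrained best-response problem against the fixed profile $\bu^{-i\star}$ is a convex program, so that the Karush--Kuhn--Tucker conditions encoded in \eqref{eq:KKTcomp} are \emph{sufficient} for optimality. First I would exploit the mean-field separation: writing $\tilde{x}_k := x_k-\E[x_k]$ and $\tilde{u}_k^j := u_k^j-\E[u_k^j]$, the dynamics \eqref{eq:MFstate} split into the deterministic mean recursion \eqref{eq:MFPmeanstate1} and a zero-mean stochastic recursion for $\tilde{x}_k$, while the expected cost takes the decoupled mean-plus-variance form \eqref{eq:meanobjective}. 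Because the constraints \eqref{eq:constraints} involve only mean quantities, the deviation part of player $i$'s problem is an unconstrained stochastic LQ problem, whereas the mean part carries all the coupling and the constraints.

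Second, I would adjoin the mean constraints to player $i$'s expected cost with the given multiplier process, forming the Lagrangian $\mathcal{L}^i(\bu^i) := \E[J^i(\bu^i,\bu^{-i\star})] - \sum_{k\in\K_l}(\mu_k^{i\star})^{\prime}\big(\bar{m}_k^i\E[x_k]+\bar{n}_k^{ii}\E[u_k^i]+\sum_{j\neq i}\bar{n}_k^{ij}\E[u_k^{j\star}]+p_k^i\big)$, where $\E[x_k]$ is the mean state generated by $(\bu^i,\bu^{-i\star})$. Since $\mu_k^{i\star}\geq 0$ and the bracketed term is nonnegative for every feasible $\bu^i\in\U^i(\bu^{-i\star})$, one has $\mathcal{L}^i(\bu^i)\leq \E[J^i(\bu^i,\bu^{-i\star})]$, with equality at $\bu^{i\star}$ by the complementarity in \eqref{eq:KKTcomp}.

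Third comes the completion of squares, carried out on the two decoupled parts with the candidate value function $W_k^i = \tfrac12\alpha_k^i\E[\tilde{x}_k^2]+\tfrac12\bar{\alpha}_k^i\E[x_k]^2+\beta_k^i\E[x_k]+\gamma_k^i$. Telescoping $\sum_{k\in\K_l}(\E[W_{k+1}^i]-\E[W_k^i])$ and substituting the other players' closed-loop responses $\E[u_k^{j\star}]=\delta_k^j\E[x_k^{\star}]+\bar{\delta}_k^j$ and $\tilde{u}_k^{j\star}=\eta_k^j\tilde{x}_k^{\star}$, I would add and subtract the per-stage integrand of $\mathcal{L}^i$ and collect terms. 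Matching the coefficients of $\tilde{x}_k^2$ and $\E[x_k]^2$ reproduces the Riccati recursions \eqref{eq:alpharecursive}--\eqref{eq:baralpharecursive} with boundary data $\alpha_K^i=q_K^i$, $\bar{\alpha}_K^i=q_K^i+\bar{q}_K^i$; forcing the integrand to be a perfect square in player $i$'s own control forces the stationarity relations \eqref{eq:coefficients} that identify $\eta_k^i,\delta_k^i,\bar{\delta}_k^i$; and matching the linear-in-$\E[x_k]$ and constant terms, where the drift $c_k$, the noise $\sigma_k^2\E[w_k^2]$, and the multiplier contributions enter, produces \eqref{eq:Betarecursive} and \eqref{eq:gammarecursive} with $\beta_K^i=0$, $\gamma_K^i=0$. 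By Assumption \ref{ass:GameAssumption}\ref{item:3} the residual quadratic in $u_k^i-u_k^{i\star}$ is nonnegative, so the identity reads $\mathcal{L}^i(\bu^i) = \tfrac12\alpha_0^i\E[(x_0-\E[x_0])^2]+\tfrac12\bar{\alpha}_0^i\E[x_0]^2+\beta_0^i\E[x_0]+\gamma_0^i + (\text{nonnegative square})$, minimized exactly at the feedback \eqref{eq:MFT_NE}.

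Finally I would chain the inequalities: for every feasible $\bu^i$, $\E[J^i(\bu^i,\bu^{-i\star})]\geq \mathcal{L}^i(\bu^i)\geq \mathcal{L}^i(\bu^{i\star}) = \E[J^i(\bu^{i\star},\bu^{-i\star})]$, the last equality again by complementarity, which is precisely \eqref{eq:MFTNEdef}; evaluating $W_0^i$ then yields the stated equilibrium cost. I expect the principal obstacle to be the mean-part completion of squares: disentangling the cross terms among $\E[x_k]$, the other players' affine offsets $\bar{\delta}_k^j$, the drift $c_k$ and the multiplier $\mu_k^{i\star}$ so that the bookkeeping collapses precisely onto the $\beta_k^i$ and $\gamma_k^i$ recursions \eqref{eq:Betarecursive} and \eqref{eq:gammarecursive}, while simultaneously verifying that the induced deviation and mean dynamics close up as \eqref{eq:Eqstate} with $\mathsf{A}_k$ and $\bar{\mathsf{A}}_k$ as defined.
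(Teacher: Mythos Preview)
Your proposal is correct and follows essentially the same approach as the paper: both introduce the quadratic guess/value functional $\tfrac12\alpha_k^i\E[\tilde{x}_k^2]+\tfrac12\bar{\alpha}_k^i\E[x_k]^2+\beta_k^i\E[x_k]+\gamma_k^i$, telescope it, adjoin the constraint terms with the multipliers $\mu_k^{i\star}$, complete squares, and then use nonnegativity of the multipliers together with complementarity to close the Nash inequality. The only cosmetic difference is that you organize the argument around the Lagrangian $\mathcal{L}^i$ and weak duality, whereas the paper ``adds and subtracts'' the multiplier term inside the completion-of-squares computation and then verifies that the residual coefficients $C_k^i,L_k^i,M_k^i,N_k^i$ vanish; the underlying algebra is identical. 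One small point to watch: when you substitute the other players' responses you write $\E[u_k^{j\star}]=\delta_k^j\E[x_k^{\star}]+\bar{\delta}_k^j$, but in the completion step against an arbitrary deviation $\bu^i$ the paper (and your own phrase ``closed-loop responses'') requires feedback on the \emph{actual} mean state $\E[x_k]$, not the equilibrium one; make sure this is reflected in the computation so that the closed-loop coefficients $\mathsf{A}_k,\bar{\mathsf{A}}_k$ appear as stated.
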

%%%%%%%
\begin{proof} 
First, it is straightforward to verify that when all the players use   strategies, given by \eqref{eq:MFT_NE}, then $\{x^\star_k,~k\in \K\}$, given by \eqref{eq:Eqstate}, is the generated state trajectory. Due to \eqref{eq:KKTcomp}, the inequality constraints \eqref{eq:constraints} hold for all players with  the strategy profile $(\bu^{i\star},\bu^{-i\star})$. This implies, $(\bu^{i\star},\bu^{-i\star})\in R(\E[x_0])$ or $\bu^{i\star}\in\U^i(\bu^{-i\star})$, and  in particular, 	$ \U^i(\bu^{-i\star})\neq \emptyset$. 
	Consider any admissible strategy profile $ \bu^i\in \U^i(\bu^{-i\star})$, and let $\{x_k,~k\in\K\}$ be the corresponding state trajectory. Using \eqref{eq:MFT_NE} for all players in $-i$, in \eqref{eq:MFstate}, \eqref{eq:MFPmeanstate1}, and \eqref{eq:constraints} we obtain the following relations
	\begin{subequations} \label{eq:allexp}
\begin{align}
 &   \E[x_{k+1}]  = \big(a_k+\bar{a}_k+\sum_{j\in-i}(b_k^j+\bar{b}^j_k)\delta_k^j \big) \E[x_k]\notag\\
    &\quad+(b_k^i+\bar{b}^i_k) \E[u_k^i]+\big(\sum_{j\in-i}(b_k^j+\bar{b}^j_k) \bar{\delta}_k^j+c_k\big), \label{eq:expr1}\\
  &  \E[x_{k+1}]^2   = \big(a_k+\bar{a}_k+\sum_{j\in-i}(b_k^j+\bar{b}^j_k)\delta_k^j \big)^2 \E[x_k]^2\notag\\
    &\quad+(b_k^i+\bar{b}^i_k)^2 \E[u_k^i]^2+\big(\sum_{j\in-i}(b_k^j+\bar{b}^j_k) \bar{\delta}_k^j+c_k\big)^2\notag\\
    &\quad+2\big(a_k+\bar{a}_k+\sum_{j\in-i}(b_k^j+\bar{b}^j_k)\delta_k^j \big)(b_k^i+\bar{b}^i_k) \E[x_k]\E[u_k^i]\notag\\
    &\quad +2\big(a_k+\bar{a}_k+\sum_{j\in-i}(b_k^j+\bar{b}^j_k)\delta_k^j \big)\big(\sum_{j\in-i}(b_k^j+\bar{b}^j_k) \bar{\delta}_k^j+c_k\big)\notag\\
    &\quad\times\E[x_k]+2(b_k^i+\bar{b}^i_k)\big(\sum_{j\in-i}(b_k^j+\bar{b}^j_k) \bar{\delta}_k^j+c_k\big)\E[u_k^i],\label{eq:expr2}\\
    &\E[\big(x_{k+1}-\E[x_{k+1}]\big)^2]=\big(a_k+\sum_{j\in-i}b^j_k\eta_k^j\big)^2\E[(x_k-\E[x_k])^2]\notag\\
    &\quad+(b^i_k)^2 \E[(u_k^i-\E[u_k^i])^2]+(\sigma_k)^2 \E[(w_k)^2] \notag%\\
\end{align}
\begin{align}
    &\quad+2\big(a_k+\sum_{j\in-i}b^j_k\eta_k^j\big)b^i_k\E[(x_k-\E[x_k])(u_k^i-\E[u_k^i]), \label{eq:expr3}\\
    &\big(\bar{m}_k^i+\sum_{j\in -i}\bar{n}_k^{ij}\delta_k^j\big)\E[x_k]+\bar{n}^{ii}_k \E[u_k^i]+\sum_{j\in -i}\bar{n}_k^{ij}\bar{\delta}_k^j+p_k^i \geq 0. \label{eq:consubstituted}
\end{align}
\end{subequations}
 Next we use the direct method to complete the proof. \\
 \noindent 
\textbf{Step 1 -- (Defining a guess functional)}:
We first define a quadratic guess functional of the following form
\begin{align*}
     f^i(k,x_k)=\tfrac{1}{2}\alpha_k^i(x_k-\E[x_k])^2+\tfrac{1}{2}\bar{\alpha}_k^i\E[x_k]^2+\beta_k^i\E[x_k]+\gamma_k^i. 
\end{align*}
\noindent 
\textbf{Step 2 -- (Telescopic sum of the guess functional):} 
Upon taking the telescopic sum of $f^i(k,x_k)$ over $k\in \K$ we obtain
\begin{align} 
	f^i(0,x_0)&=f^i(K,x_K)-\sum_{k\in\K_l}\big(f^i(k+1,x_{k+1})-f^i(k,x_k)\big)\nonumber\\
   &=\tfrac{1}{2}\alpha_K^i(x_K-\E[x_K])^2+\tfrac{1}{2}\bar{\alpha}_K^i\E[x_K]^2+\beta_K^i\E[x_K]+\gamma_K^i\nonumber\\
   &~~-\tfrac{1}{2}\sum_{k\in\K_l}\big(\alpha_{k+1}^i(x_{k+1}-\E[x_{k+1}])^2-\alpha_k^i(x_k-\E[x_k])^2\big)\nonumber\\
   &~~-\tfrac{1}{2}\sum_{k\in\K_l}\big(\bar{\alpha}_{k+1}^i\E[x_{k+1}]^2-\bar{\alpha}_k^i\E[x_k]^2\big)\nonumber\\
   &~~-\tfrac{1}{2}\sum_{k\in\K_l}\big(\beta_{k+1}^i\E[x_{k+1}]-\beta_k^i\E[x_k]+\gamma_{k+1}^i-\gamma_k^i\big).
	\label{eq:telescopicsum}
\end{align} 
 \noindent 
\textbf{Step 3 -- (Difference between the cost and  the guess functional):}
Next, using the expressions \eqref{eq:meanobjective}, \eqref{eq:expr1}-\eqref{eq:expr3} and \eqref{eq:telescopicsum} we compute    $\E[J^i(\bu^{i}, \bu^{-i})-f^i(0,x_0)]$ as follows
 \begin{align}
 	&\E[J^i(\bu^i, \bu^{-i\star})-f^i(0,x_0)]=\tfrac{1}{2}(q_K^i-\alpha_K^i)\E[\big(x_K-\E[x_K]\big)^2]\nonumber\\
 	&+\tfrac{1}{2}(q_K^i+\bar{q}_K^i-\bar{\alpha}_K^i)\E[x_K]^2-\beta_K^i\E[x_K]-\gamma_K^i\nonumber\\
 	&+\tfrac{1}{2}\sum_{k\in\K_l}\big((C_k^i+\tfrac{(B^i_k)^2}{A_k^i})\E[(x_k-\E[x_k])^2]+(L_k^i+\tfrac{(F_k^i)^2}{D_k^i})\E[x_k]^2\nonumber\\
 	&+\sum_{k\in\K_l}\big(M_k^i+{\mu_k^{i\star}}^{\prime}\big(\bar{m}_k^i+\sum_{j\in -i}\bar{n}_k^{ij}\delta_k^j\big)+\tfrac{F_k^iG_k^i}{D_k^i}\big)E[x_k]\nonumber\\
 	&+\sum_{k\in\K_l}\big(N_k^i+{\mu_k^{i\star}}^{\prime}\big(\sum_{j\in -i}\bar{n}_k^{ij}\bar{\delta}_k^j+p_k^i\big)+\tfrac{1}{2}\tfrac{(G_k^i)^2}{D_k^i}\big)\nonumber\\
 	&+\tfrac{1}{2}\sum_{k\in\K_l}\big(A_k^i\E[(u_k^i-\E[u_k^i])^2]+2B_k^i\E[(x_k-\E[x_k])(u_k^i-\E[u_k^i])]\big)\nonumber\\
 	&+\tfrac{1}{2}\sum_{k\in\K_l}\big(D_k^i\E[u_k^i]^2+2\big(F_k^i\E[x_k]+G_k^i+{\mu_k^{i\star}}^{\prime}\bar{n}_k^{ii}\big)\E[u_k^i]\big),\label{eq:PiLagrangian1}
 \end{align}
where $A_k^i :=r_k^{ii}+\alpha_{k+1}^i(b^i_k)^2$, $
B_k^i:=\alpha_{k+1}^i\big(a_k+\sum_{j\in-i}b^j_k\eta_k^j\big)b^i_k$,
\begin{align*}
    C_k^i&:=\big(q_k^i+\sum_{j\in -i}r_k^{ij}(\eta_k^j)^2\big)+\alpha_{k+1}^i\big(a_k+\sum_{j\in-i}b^j_k\eta_k^j\big)^2 -\alpha_k^i-\tfrac{(B^i_k)^2}{A_k^i},\\
    D_k^i&:=r_k^{ii}+\bar{r}_k^{ii}+\bar{\alpha}_{k+1}^i(b_k^i+\bar{b}^i_k)^2,\\
    F_k^i&:=\bar{\alpha}_{k+1}^i\big(a_k+\bar{a}_k+\sum_{j\in-i}(b_k^j+\bar{b}^j_k)\delta_k^j \big)(b_k^i+\bar{b}^i_k),\\
    G_k^i&:=(b_k^i+\bar{b}^i_k)\Big(\bar{\alpha}_{k+1}^i\big(\sum_{j\in-i}(b_k^j+\bar{b}^j_k) \bar{\delta}_k^j+c_k\big)+\beta_{k+1}^i\Big) -{\mu_k^{i\star}}^{\prime}\bar{n}_k^{ii},\\
    L_k^i&:=(q_k^i+\bar{q}_k^i)+\sum_{j\in -i}(r_k^{ij}+\bar{r}_k^{ij})(\delta_k^j)^2-\bar{\alpha}_k^i\\
    &+\bar{\alpha}_{k+1}^i\big(a_k+\bar{a}_k+\sum_{j\in-i}(b_k^j+\bar{b}^j_k)\delta_k^j\big)^2 -\tfrac{(F_k^i)^2}{D_k^i},\\
    M_k^i&:=\sum_{j\in -i}(r_k^{ij}+\bar{r}_k^{ij})\delta_k^j\bar{\delta}_k^j-{\mu_k^{i\star}}^{\prime}\big(\bar{m}_k^i+\sum_{j\in -i}\bar{n}_k^{ij}\delta_k^j\big)\\
    &+\big(a_k+\bar{a}_k+\sum_{j\in-i}(b_k^j+\bar{b}^j_k)\delta_k^j \big)\big(\bar{\alpha}_{k+1}^i\big(\sum_{j\in-i}(b_k^j+\bar{b}^j_k) \bar{\delta}_k^j
      \end{align*}
\begin{align*}%    \\  
    &+c_k\big)+\beta_{k+1}^i\big)-\beta_{k}^i-\tfrac{F_k^iG_k^i}{D_k^i},~ \text{and}\\
    N_k^i&:=\tfrac{1}{2}\sum_{j\in-i}(r_k^{ij}+\bar{r}_k^{ij})(\bar{\delta}_k^j)^2-{\mu_k^{i\star}}^{\prime}\big(\sum_{j\in -i}\bar{n}_k^{ij}\bar{\delta}_k^j+p_k^i\big)\\
    &+\tfrac{1}{2}\alpha_{k+1}^i(\sigma_k)^2 \E[(w_k)^2]+\tfrac{1}{2}\bar{\alpha}_{k+1}^i\big(\sum_{j\in-i}(b_k^j+\bar{b}^j_k) \bar{\delta}_k^j+c_k\big)^2\\
    &+\beta_{k+1}^i\big(\sum_{j\in-i}(b_k^j+\bar{b}^j_k) \bar{\delta}_k^j+c_k\big)+\gamma_{k+1}^i-\gamma_k^i-\tfrac{1}{2}\tfrac{(G_k^i)^2}{D_k^i}.%
\end{align*}
\noindent  
 \textbf{Step 4 -- (Incorporation of inequality constraints and completion of squares):}
 We add and subtract the term $\sum_{k\in\K_l}{\mu_k^{i\star}}^{\prime}\big((\bar{m}_k^i+\sum_{j\in -i}\bar{n}_k^{ij}\delta_k^j)\E[x_k]+\bar{n}^{ii}_k \E[u_k^i]+\sum_{j\in -i}\bar{n}_k^{ij}\bar{\delta}_k^j+p_k^i\big)$ to the right-hand-side of the expression $\E[J^i(\bu^{i}, \bu^{-i})-f^i(0,x_0)]$ in \eqref{eq:PiLagrangian1}. Then, we perform the completion of squares of the terms involving $(u_k^i-\E[u_k^i])$ and $\E[u_k^i]$ as follows  
\begin{subequations}\label{eq:csquares}
\begin{align}
    &\tfrac{1}{2}A_k^i\E[(u_k^i-\E[u_k^i])^2]+B_k^i\E[(x_k-\E[x_k])(u_k^i-\E[u_k^i])]\nonumber\\
    &\quad =\tfrac{1}{2}A_k^i\E\Big[\big((u_k^i-\E[u_k^i])+\tfrac{B_k^i}{A_k^i}(x_k-\E[x_k])\big)^2\Big]\nonumber\\
    &\quad\quad-\tfrac{1}{2}\tfrac{(B^i_k)^2}{A_k^i}\E[(x_k-\E[x_k])^2],\\
    &\tfrac{1}{2}D_k^i\E[u_k^i]^2+\big(F_k^i\E[x_k]+G_k^i\big)\E[u_k^i]\nonumber\\
    &\quad=\tfrac{1}{2}D_k^i\big(\E[u_k^i]+\tfrac{1}{D_k^i}\big(F_k\E[x_k]+G_k^i\big)\big)^2-\tfrac{1}{2}\tfrac{(F^i_k)^2}{D_k^i}\E[x_k]^2\nonumber\\
    &\quad\quad-\tfrac{F_k^iG_k^i}{D_k^i}\E[x_k]-\tfrac{1}{2}\tfrac{(G^i_k)^2}{D_k^i}.
\end{align}    
\end{subequations}
After performing the  above calculations we obtain 
\begin{align}
    &\E[J^i(\bu^i, \bu^{-i\star})]=\E[f^i(0,x_0)]+\sum_{k\in\K_l}{\mu_k^{i\star}}^{\prime}\big((\bar{m}_k^i+\sum_{j\in -i}\bar{n}_k^{ij}\delta_k^j)\E[x_k]\nonumber\\
    &+\bar{n}^{ii}_k \E[u_k^i]+\sum_{j\in -i}\bar{n}_k^{ij}\bar{\delta}_k^j+p_k^i\big)+\tfrac{1}{2}(q_K^i-\alpha_K^i)\E[\big(x_K-\E[x_K]\big)^2]\nonumber\\
    &+\tfrac{1}{2}(q_K^i+\bar{q}_K^i-\bar{\alpha}_K^i)\E[x_K]^2-\beta_K^i\E[x_K]-\gamma_K^i\nonumber\\
    &+\tfrac{1}{2}\sum_{k\in\K_l}\big(C_k^i\E[(x_k-\E[x_k])^2]+L_k^i\E[x_k]^2+{2}M_k^iE[x_k]+{2}N_k^i\big)\nonumber\\
    &+\tfrac{1}{2}\sum_{k\in\K_l}A_k^i\E\big[\big((u_k^i-\E[u_k^i])+\tfrac{B_k^i}{A_k^i}(x_k-\E[x_k])\big)^2\big]\nonumber\\
    &+\tfrac{1}{2}\sum_{k\in\K_l}D_k^i\big(\E[u_k^i]+\tfrac{1}{D_k^i}\big(F_k^i\E[x_k]+G_k^i\big)\big)^2.\label{eq:PiLagrangian2}
\end{align}
\noindent  
\textbf{Step 5 -- (Verification of MFTGNE \eqref{eq:MFTNEdef}):}
Next, using the definitions of $A_k^i$, $B_k^i$, $D_k^i$, $F_k^i$ and $G_k^i$ in \eqref{eq:coefficients} it is easy to verify that $\eta_k^i=-B_k^i/A_k^i$, $\delta_k^i=-F_k^i/D_k^i$ and $\bar{\delta}_k^i=-G_k^{i}/D_k^i$.  Then, using $\eta_k^i=-B_k^i/A_k^i$ and  \eqref{eq:alpharecursive}, we obtain
\begin{align*}
    C_k^i&=q_k^i+\sum_{j\in -i}r_k^{ij}(\eta_k^j)^2+\alpha_{k+1}^i\big(a_k+\sum_{j\in-i}b^j_k\eta_k^j\big)^2-\alpha_k^i-\tfrac{(B_k^i)^2}{A_k^i}\\
    &=q_k^i+\sum_{j\in \N}r_k^{ij}(\eta_k^j)^2+\alpha_{k+1}^i\big(a_k+\sum_{j\in\N}b^j_k\eta_k^j\big)^2-\alpha_k^i-\tfrac{(B_k^i)^2}{A_k^i}\\
    &\quad-2\alpha_{k+1}^i\big(a_k+\sum_{j\in-i}b^j_k\eta_k^j\big)b_k^i\eta_k^i-(r_k^{ii}+\alpha_{k+1}^i(b_k^i)^2)(\eta_k^i)^2\\
    &=q_k^i+\sum_{j\in \N}r_k^{ij}(\eta_k^j)^2+\alpha_{k+1}^i(\mathsf{A}_k)^2\\
    &\quad-\alpha_k^i-\tfrac{(B_k^i)^2}{A_k^i}+2\tfrac{(B_k^i)^2}{A_k^i}-\tfrac{(B_k^i)^2}{A_k^i}=0.
\end{align*}
Similarly using $\delta_k^i=-F_k^i/D_k^i$ and \eqref{eq:baralpharecursive}, we observe that
\begin{align*}
    L_k^i:&=(q_k^i+\bar{q}_k^i)+\sum_{j\in -i}(r_k^{ij}+\bar{r}_k^{ij})(\delta_k^j)^2-\bar{\alpha}_k^i\\
    &\quad+\bar{\alpha}_{k+1}^i\big(a_k+\bar{a}_k+\sum_{j\in-i}(b_k^j+\bar{b}^j_k)\delta_k^j\big)^2 -\tfrac{(F_k^i)^2}{D_k^i}\\
    &=(q_k^i+\bar{q}_k^i)+\sum_{j\in \N}(r_k^{ij}+\bar{r}_k^{ij})(\delta_k^j)^2\\
    &\quad+\bar{\alpha}_{k+1}^i\big(a_k+\bar{a}_k+\sum_{j\in\N}(b_k^j+\bar{b}^j_k)\delta_k^j\big)^2-\bar{\alpha}_k^i-\tfrac{(F_k^i)^2}{D_k^i}\\
    &\quad-2\bar{\alpha}_{k+1}^i\big(a_k+\bar{a}_k+\sum_{j\in -i}(b_k^j+\bar{b}^j_k)\delta_k^j\big)(b_k^i+\bar{b}^i_k)\delta_k^i\\
    &\quad-\big(r_k^{ii}+\bar{r}_k^{ii}+\bar{\alpha}_{k+1}^i(b_k^i+\bar{b}^i_k)^2\big)(\delta_k^j)^2\\
    &=(q_k^i+\bar{q}_k^i)+\sum_{j\in \N}(r_k^{ij}+\bar{r}_k^{ij})(\delta_k^j)^2+\bar{\alpha}_{k+1}^i(\bar{\mathsf{A}}_k)^2-\bar{\alpha}_k^i\\
    &\quad-\tfrac{(F_k^i)^2}{D_k^i}+2\tfrac{(F_k^i)^2}{D_k^i}-\tfrac{(F_k^i)^2}{D_k^i}=0.
\end{align*}
Also using $\delta_k^i=-F_k^i/D_k^i$, $\bar{\delta}_k^i=-G_k^{i}/D_k^i$, \eqref{eq:Betarecursive}, \eqref{eq:gammarecursive} and the definition of $G_k^i$, we simplify the expressions for $M_k^i$ and $N_k^i$ as follows
\begin{align*}
    M_k^i&:=\sum_{j\in -i}(r_k^{ij}+\bar{r}_k^{ij})\delta_k^j\bar{\delta}_k^j-{\mu_k^{i\star}}^{\prime}\big(\bar{m}_k^i+\sum_{j\in -i}\bar{n}_k^{ij}\delta_k^j\big)\\
    &\quad+\big(a_k+\bar{a}_k+\sum_{j\in-i}(b_k^j+\bar{b}^j_k)\delta_k^j \big)\big(\bar{\alpha}_{k+1}^i\big(\sum_{j\in-i}(b_k^j+\bar{b}^j_k) \bar{\delta}_k^j\\
    &\quad+c_k\big)+\beta_{k+1}^i\big)-\beta_{k}^i-\tfrac{F_k^iG_k^i}{D_k^i}\\
    &=\sum_{j\in -i}(r_k^{ij}+\bar{r}_k^{ij})\delta_k^j\bar{\delta}_k^j-{\mu_k^{i\star}}^{\prime}\big(\bar{m}_k^i+\sum_{j\in\N}\bar{n}_k^{ij}\delta_k^j\big)\\
    &\quad+\big(a_k+\bar{a}_k+\sum_{j\in\N}(b_k^j+\bar{b}^j_k)\delta_k^j \big)\big(\bar{\alpha}_{k+1}^i\big(\sum_{j\in-i}(b_k^j+\bar{b}^j_k) \bar{\delta}_k^j\\
    &\quad+c_k\big)+\beta_{k+1}^i\big)-\beta_{k}^i+{\mu_k^{i\star}}^{\prime}\bar{n}_k^{ii}\delta_k^i+\delta_k^iG_k^i,~(\textrm{as}~\delta_k^i=-\tfrac{F_k^i}{D_k^i})\\
    &\quad-(b_k^i+\bar{b}^i_k)\big(\bar{\alpha}_{k+1}^i\big(\sum_{j\in-i}(b_k^j+\bar{b}^j_k) \bar{\delta}_k^j+c_k\big)+\beta_{k+1}^i\big)\delta_k^i\\
    &=\sum_{j\in -i}(r_k^{ij}+\bar{r}_k^{ij})\delta_k^j\bar{\delta}_k^j-{\mu_k^{i\star}}^{\prime}\big(\bar{m}_k^i+\sum_{j\in\N}\bar{n}_k^{ij}\delta_k^j\big)\\
    &\quad+\bar{\mathsf{A}}_k\Big(\bar{\alpha}_{k+1}^i\big(\sum_{j\in-i}(b_k^j+\bar{b}^j_k) \bar{\delta}_k^j+c_k\big)+\beta_{k+1}^i\Big)-\beta_{k}^i\\
    &\quad+\delta_k^i(G_k^i+{\mu_k^{i\star}}^{\prime}\bar{n}_k^{ii})-\delta_k^i(G_k^i+{\mu_k^{i\star}}^{\prime}\bar{n}_k^{ii})=0,
\end{align*}
\begin{align*}
    N_k^i&:=\tfrac{1}{2}\sum_{j\in-i}(r_k^{ij}+\bar{r}_k^{ij})(\bar{\delta}_k^j)^2-{\mu_k^{i\star}}^{\prime}\big(\sum_{j\in -i}\bar{n}_k^{ij}\bar{\delta}_k^j+p_k^i\big)\\
    &\quad+\tfrac{1}{2}\alpha_{k+1}^i(\sigma_k)^2 \E[(w_k)^2]+\tfrac{1}{2}\bar{\alpha}_{k+1}^i\big(\sum_{j\in-i}(b_k^j+\bar{b}^j_k) \bar{\delta}_k^j+c_k\big)^2\\
    &\quad+\beta_{k+1}^i\big(\sum_{j\in-i}(b_k^j+\bar{b}^j_k) \bar{\delta}_k^j+c_k\big)+\gamma_{k+1}^i-\gamma_k^i-\tfrac{1}{2}\tfrac{(G_k^i)^2}{D_k^i}\\
    &=\tfrac{1}{2}\sum_{j\in-i}(r_k^{ij}+\bar{r}_k^{ij})(\bar{\delta}_k^j)^2-{\mu_k^{i\star}}^{\prime}\big(\sum_{j\in -i}\bar{n}_k^{ij}\bar{\delta}_k^j+p_k^i\big)\\
    &\quad+\tfrac{1}{2}\alpha_{k+1}^i(\sigma_k)^2 \E[(w_k)^2]+\tfrac{1}{2}\big(\sum_{j\in-i}(b_k^j+\bar{b}^j_k) \bar{\delta}_k^j+c_k\big)\\
    &\quad \times\big(\bar{\alpha}_{k+1}^i(\sum_{j\in-i}(b_k^j+\bar{b}^j_k) \bar{\delta}_k^j+c_k)+\beta_{k+1}^i\big)\\
    &\quad+\tfrac{1}{2}\beta_{k+1}^i\big(\sum_{j\in-i}(b_k^j+\bar{b}^j_k) \bar{\delta}_k^j+c_k\big)+\gamma_{k+1}^i-\gamma_k^i+\tfrac{1}{2}G_k^i\bar{\delta}_k^i
\end{align*}
\begin{align*}
    &=\tfrac{1}{2}\sum_{j\in-i}(r_k^{ij}+\bar{r}_k^{ij})(\bar{\delta}_k^j)^2-{\mu_k^{i\star}}^{\prime}\big(\sum_{j\in -i}\bar{n}_k^{ij}\bar{\delta}_k^j+p_k^i\big)\\
    &\quad+\tfrac{1}{2}\alpha_{k+1}^i(\sigma_k)^2 \E[(w_k)^2]+\tfrac{1}{2}\big(\sum_{j\in\N}(b_k^j+\bar{b}^j_k) \bar{\delta}_k^j+c_k\big)\\
    &\quad \times\big(\bar{\alpha}_{k+1}^i(\sum_{j\in-i}(b_k^j+\bar{b}^j_k) \bar{\delta}_k^j+c_k)+\beta_{k+1}^i\big)\\
    &\quad+\tfrac{1}{2}\beta_{k+1}^i\big(\sum_{j\in-i}(b_k^j+\bar{b}^j_k) \bar{\delta}_k^j+c_k\big)+\gamma_{k+1}^i-\gamma_k^i\\
    &\quad-\tfrac{1}{2}(G_k^i+{\mu_k^{i\star}}^{\prime}\bar{n}_k^{ii})\bar{\delta}_k^i+\tfrac{1}{2}G_k^i\bar{\delta}_k^i=0.
\end{align*}
As the terms $C_k^i$, $L_k^i$, $M_k^i$ and $N_k^i$ are zero for $k\in \K_l$ and $\alpha_K^i=q_K^i$, $\bar{\alpha}_K^i=q_K^i+\bar{q}_K^i$, $\beta_K^i=0$,  $\gamma_K^i=0$, the expression  \eqref{eq:PiLagrangian2} is simplified as follows 
\begin{align}
    &\E[J^i(\bu^i, \bu^{-i\star})]=\E[f^i(0,x_0)]+\sum_{k\in\K_l}{\mu_k^{i\star}}^{\prime}\big((\bar{m}_k^i+\sum_{j\in -i}\bar{n}_k^{ij}\delta_k^j)\nonumber\\
    &\quad\times\E[x_k]+\bar{n}^{ii}_k \E[u_k^i]+\sum_{j\in -i}\bar{n}_k^{ij}\bar{\delta}_k^j+p_k^i\big)\nonumber \\
    &\quad+\tfrac{1}{2}\sum_{k\in\K_l}A_k^i\E\big[\big((u_k^i-\E[u_k^i])-\eta_k^i(\bar{x}_k-\E[x_k])\big)^2\big]\nonumber\\
    &\quad+\tfrac{1}{2}\sum_{k\in\K_l}D_k^i\big(\E[u_k^i]-\delta_k^i\E[x_k]-\bar{\delta}_k^i\big)^2.\label{eq:StaticObj1}
\end{align}
If $\bu^i$ is set as $\bu^{i\star}$ (given by \eqref{eq:MFT_NE}) in \eqref{eq:StaticObj1}, then we know $\{x^\star_k,~k\in \K\}$ is the corresponding state trajectory which satisfies the complementarity condition \eqref{eq:KKTcomp} and evolves according to \eqref{eq:Eqstate}. 
Then, the second term on the right-hand-side of the expression in \eqref{eq:StaticObj1} vanishes. Besides this, the third and the fourth terms also vanish as $\bu^{i\star}$ satisfies \eqref{eq:MFT_NE}. So, we have
\begin{align}
    &\E[J^i(\bu^{i\star}, \bu^{-i\star})]=\E[f^i(0,x_0)].\label{eq:StaticObj2}
\end{align}
From Assumption \ref{ass:GameAssumption}.\ref{item:1}, every 
 admissible $\bu^i\in \U^i(\bu^{-i\star})$ satisfies $(\bar{m}_k^i+\sum_{j\in -i}\bar{n}_k^{ij}\delta_k^j)\E[x_k]+\bar{n}^{ii}_k \E[u_k^i]+\sum_{j\in -i}\bar{n}_k^{ij}\bar{\delta}_k^j+p_k^i\geq 0$.
Further, the multipliers in \eqref{eq:KKTcomp} satisfy $\mu_k^{i\star} {\geq 0}$ for all $k\in \K_l$.
This implies, the second term on right-hand-side of the expression in \eqref{eq:StaticObj1}
is non-negative. Besides this, as $A_k^i,D_k^i >0$ for all $k\in\K_l$ the third and the fourth terms are also non-negative. Consequently, comparing \eqref{eq:StaticObj1} and \eqref{eq:StaticObj2}, we obtain
 \begin{align*}
     \E[J_i( \bu^{i\star}, \bu^{-i\star})] \leq \E[J_i(\bu^i, \bu^{-i\star})],~\forall \bu^i\in \U^i(\bu^{-i\star}).
\end{align*}
As the choice of player $i$ is arbitrary, the above condition holds for each player $i\in\N$. So, from Definition \ref{def:MFTNEdef}, the strategy profile $\{u_k^{i\star},~i\in\N,~k\in\K_l\}$ given by \eqref{eq:MFT_NE} is indeed a MFTGNE. 
\end{proof}
\begin{remark} \label{rem:parameterdependency}
Using  the algebraic equations \eqref{eq:coefficients}, at each stage $k\in \K_l$, we note that the variables $\{\eta_k^i,\delta_k^i,\bar{\delta}_k^i, i\in\N\}$  can be solved interms of the variables $\{\alpha_{k+1}^i,\bar{\alpha}_{k+1}^i,\beta_{k+1}^i, i\in\N\}$. Then, using these solutions in the  backward difference equations \eqref{eq:BackwardRecursive} the variables $\{\alpha_{k}^i,\bar{\alpha}_{k}^i,\beta_{k}^i, i\in\N\}$ are evaluated. So, starting with the boundary conditions $\alpha_K^i=q_K^i$, $\bar{\alpha}_K^i=q_K^i+\bar{q}_K^i$ and $\beta_K^i=0$, and using the above mentioned recursive procedure the variables
	$\{\alpha_{k}^i,\bar{\alpha}_{k}^i,\beta_{k}^i,  \eta_k^i ,  \delta_k^i ,\bar{\delta}_k^i,~k\in \K_l,~i\in \N \}$ are determined. In particular, from \eqref{eq:bdeltaeq} and \eqref{eq:Betarecursive}, the  variables $\{\bar{\delta}_k^i,\beta_k^i,k\in \K_l,i\in \N\}$ contain linear terms involving the multipliers $\{\mu_k^{i\star},~k\in \K,~i\in \N\}$. Further, substituting $\{\bar{\delta}_k^i,~k\in \K_l,~i\in \N\}$ in \eqref{eq:Eqstate}, the MFTGNE state trajectory $\{x_k^\star,~k\in \K\}$  is expressed linearly in terms of the multipliers  $\{\mu_k^{i\star},~k\in \K,~i\in \N\}$. Upon eliminating these state variables in \eqref{eq:KKTcomp} we obtain (implicit) complementarity conditions involving only the multiplies
	and $\E[x_0]$ (which is known). In Section \ref{sec:Solvability}, under a few assumptions on the problem data, we illustrate the above mentioned procedure towards determining the multipliers.
\end{remark} 
\begin{remark} \label{rem:relaxedCond}
Assumption \ref{ass:GameAssumption}.\ref{item:3} can be relaxed with a less  stringent condition by requiring that the solutions  $\{\alpha_k^i,\bar{\alpha}_k^i,~i\in \N,~k\in \K\}$ of  the backward difference equations \eqref{eq:alpharecursive}-\eqref{eq:baralpharecursive} are such that    $A_k^i=r_k^{ii}+\alpha_{k+1}^i(b^i_k)^2$ and  $D_k^i=r_k^{ii}+\bar{r}_k^{ii}+\bar{\alpha}_{k+1}^i(b_k^i+\bar{b}^i_k)^2$  are positive for all $k\in \K_l$ and $i\in \N$. We notice that $\{\alpha_k^i,\bar{\alpha}_k^i,~i\in \N,~k\in \K\}$ depend only on the problem data associated with state dynamics \eqref{eq:MFstate} and objectives \eqref{eq:objective1}.
	So, using the recursive procedure mentioned in Remark \ref{rem:parameterdependency},  the required positivity condition can be verified numerically using the problem data; see also Remark \ref{rem:matrixinv}.
\end{remark}
\begin{remark}From Remark \ref{rem:relaxedCond} and \eqref{eq:StaticObj1}, we have that player $i$'s
		expected cost function is a strictly convex function in her decision variables $\bu^i$.
\end{remark}
\section{Solvability}\label{sec:Solvability}  
In this section, we present an approach for reformulating the equations \eqref{eq:KKTcomp}-\eqref{eq:BackwardRecursive} as single large-scale linear complementarity problem.
This procedure is based on \cite{Reddy:15}, and involves elimination of state variables in the
equations \eqref{eq:KKTcomp}-\eqref{eq:Eqstate}; see also Remarks \ref{rem:parameterdependency} and
\ref{rem:relaxedCond}. We define all the notations used in this section as follows: $\mathbf{R}_k:=\oplus_{i=1}^Nr_k^{ii}$, $\bar{\mathbf{R}}_k:=\oplus_{i=1}^N(r_k^{ii}+\bar{r}_k^{ii})$, $\mathbf{B}_k:=\oplus_{i=1}^Nb_k^{i}$, $\bar{\mathbf{B}}_k:=\oplus_{i=1}^N(b_k^i+\bar{b}_k^i)$, $\mathsf{B}_k:=\row{b_k^i}_{i=1}^{N}$, 
$\mathsf{N}_k:=\oplus_{i=1}^N\bar{n}_k^{ii}$, 
$\bar{\mathsf{M}}_k:=\col{\bar{m}_k^i+\sum_{j\in\N}\bar{n}_k^{ij}\delta_k^j}_{i=1}^{N}$,
$\bm{\alpha}_{k}:=\col{\alpha_{k}^i}_{i=1}^N$, $\bar{\bm{\alpha}}_{k}:=\col{\bar{\alpha}_{k}^i}_{i=1}^N$, $\bm{\eta}_k:=\col{\eta_k^i}_{i=1}^{N}$, $\bm{\beta}_{k}:=\col{\beta_{k}^i}_{i=1}^N$, $\bm{\delta}_k:=\col{\delta_k^i}_{i=1}^{N}$, $\bar{\bm{\delta}}_k:=\col{\bar{\delta}_k^i}_{i=1}^{N}$, $\bm{\mu}_k^{\star}:=\col{\mu_k^{i\star}}_{i=1}^{N}$, 
$\Lambda_k:=\mathbf{R}_k+\mathbf{B}_k^{\prime}\bm{\alpha}_{k+1}\mathsf{B}_k$, $\bar{\Lambda}_k:=\bar{\mathbf{R}}_k+\bar{\mathbf{B}}_k^{\prime}\bar{\bm{\alpha}}_{k+1}\bar{\mathsf{B}}_k$, $[\mathsf{P}_{k}^1]_{ij}=(r_k^{ij}+\bar{r}_k^{ij})\delta_k^j+\bar{\mathsf{A}}_k\bar{\alpha}_{k+1}^i(b_k^j+\bar{b}^j_k)$ for $i\neq j$, $[\mathsf{P}_{k}^1]_{ii}=0$, $\mathsf{P}_{k}^2=\oplus_{i=1}^N(\bar{m}_k^i+\sum_{j\in\N}\bar{n}_k^{ij}\delta_k^j)^{\prime}$, $\mathsf{P}_{k}^3=\col{\bar{\mathsf{A}}_k\bar{\alpha}_{k+1}^i}_{i=1}^N$. 
Next, for all $k\in\K_l$, we aggregate the variables as $x_{\K}^{\star}=\col{x_{k}^{\star}}_{k=0}^{K-1}$, $\E[x_{\K}^{\star}]=\col{\E[x_{k}^{\star}]}_{k=0}^{K-1}$,  $\bu^{\star}_\K=\col{\bu^{\star}_k}_{k=0}^{K-1}$,  $\E[\bu_{\K}^{\star}]=\col{\E[\bu_{k}^{\star}]}_{k=0}^{K-1}$, $\bm{\mu}^{\star}_\K=\col{\bm{\mu}^{\star}_k}_{k=0}^{K-1}$, $\bm{c}_\K=\col{c_{k}}_{k=0}^{K-1}$, $\bm{\bar{\delta}}_\K=\col{\bm\bar{\delta}_{k}}_{k=0}^{K-1}$, $\bm{p}_\K=\col{\bm{p}_k}_{k=0}^{K-1}$, $\bm{w}_\K=\col{w_k}_{k=0}^{K-1}$, $\bm{\eta}_\K=\oplus_{k=0}^{K-1}\bm{\eta}_k$, 
$\bm{\delta}_\K=\oplus_{k=0}^{K-1}\bm{\delta}_k$, $\bar{\mathsf{M}}_\K=\oplus_{k=0}^{K-1}\bar{\mathsf{M}}_k$, $\bar{\mathsf{N}}_\K=\oplus_{k=0}^{K-1}\bar{\mathsf{N}}_k$. 
 Let   $\psi(k, \tau)$ and $\phi(k, \tau)$ be the state transition matrices associated with the system dynamics \eqref{eq:equistatediff} and \eqref{eq:equistate}, respectively i.e., $\psi(k, \tau)=\mathsf{A}_{k-1}\mathsf{A}_{k-2}\cdots\mathsf{A}_{\tau}$ for any $k > \tau$ and $\psi(k, \tau)=\mathbf{I}$ for $k=\tau$, $\phi(k, \tau)=\bar{\mathsf{A}}_{k-1}\bar{\mathsf{A}}_{k-2}\cdots\bar{\mathsf{A}}_{\tau}$ for any $k > \tau$ and $\phi(k, \tau)=\mathbf{I}$ for $k=\tau$. Using these, we define  $[\mathsf{P}_\K^1]_{k\tau}=\bar{\mathbf{B}}_{k+1}^{\prime}\big(\textbf{I}_N\otimes\phi(\tau-1,k)\big)\mathsf{P}_{\tau-1}^1$,  $[\mathsf{P}_\K^2]_{k\tau}=-\bar{\mathbf{B}}_{k+1}^{\prime}\big(\textbf{I}_N\otimes\phi(\tau-1,k)\big)\mathsf{P}_{\tau-1}^2$,  $[\mathsf{P}_\K^3]_{k\tau}=-\bar{\mathbf{B}}_{k+1}^{\prime}\big(\textbf{I}_N\otimes\phi(\tau-1,k)\big)\mathsf{P}_{\tau-1}^3$ for $\tau>k$, $[\mathsf{P}_\K^1]_{kk}=\bar{\Lambda}_k$, $[\mathsf{P}_\K^2]_{kk}=\mathsf{N}_k^{\prime}$, $[\mathsf{P}_\K^3]_{kk}=-\bar{\mathbf{B}}_k^{\prime}\bar{\bm{\alpha}}_{k+1}$ and $[\mathsf{P}_\K^1]_{k\tau}=\mathbf{0}$, $[\mathsf{P}_\K^2]_{k\tau}=\mathbf{0}$, $[\mathsf{P}_\K^3]_{k\tau}=\mathbf{0}$ for $\tau <k$, $[\bm{\Psi}_0]_{k}=\psi(k-1, 0)$, $[\bm{\Psi}_1]_{k\tau}=\psi(k-1, \tau)\sigma_{\tau-1}$, for $k>\tau$, $[\bm{\Psi}_1]_{k\tau}=\mathbf{0}$ for $k\leq \tau$,  $[\bm{\Phi}_0]_{k}=\phi(k-1, 0)$, $[\bm{\Phi}_1]_{k \tau}=\phi(k-1, \tau)\bar{\mathsf{B}}_{\tau-1}$, $[\bm{\Phi}_2]_{k \tau}=\phi(k-1, \tau)$ for $k >\tau$ and $[\bm{\Phi}_1]_{k \tau}=\mathbf{0}$,  $[\bm{\Phi}_2]_{k \tau}=\mathbf{0}$ for $k \leq \tau$ with  $k,\tau \in\K_r$.

  Using the above notations for all $i\in\N$,   \eqref{eq:MFT_NE} can be written compactly for all $k\in\K_l$ as
\begin{subequations}\label{eq:MFT_NEv}
    \begin{align}
        \bu_\K^{\star}-\E[\bu_\K^{\star}]&=\bm{\eta}_\K(x_\K^{\star}-\E[x_\K^{\star}]),\label{eq:Unoise}\\
    \E[\bu_\K^{\star}]&=\bm{\delta}_\K\E[x_\K^{\star}]+\bm{\bar{\delta}}_\K.\label{eq:uforallK}
    \end{align}
\end{subequations}
Similarly, for all $i\in\N$, \eqref{eq:coefficients} are given by
\begin{subequations}\label{eq:lambdavect}
    \begin{align}
        \Lambda_k\bm{\eta}_k&=-\mathbf{B}_k^{\prime}\bm{\alpha}_{k+1}a_k,\\
        \bar{\Lambda}_k\bm{\delta}_k&=-\bar{\mathbf{B}}_k^{\prime}\bar{\bm{\alpha}}_{k+1}(a_k+\bar{a}_k),\\
        \bar{\Lambda}_k\bar{\bm{\delta}}_k&=-\bar{\mathbf{B}}_k^{\prime}\bar{\bm{\alpha}}_{k+1}c_k-\bar{\mathbf{B}}_k^{\prime}\bm{\beta}_{k+1}+\mathsf{N}_k^{\prime}\bm{\mu}_k^{\star}\label{eq:VBarDeltarecursive1}.
    \end{align}
\end{subequations}
Also the vector form representation of \eqref{eq:Betarecursive} is given as
\begin{align}
    \bm{\beta}_k&=(\textbf{I}_N\otimes\bar{\mathsf{A}}_k)\bm{\beta}_{k+1}+\mathsf{P}_{k}^1\bm{\bar{\delta}}_{k}+\mathsf{P}_{k}^2\bm{\mu}_{k}^{\star}+\mathsf{P}_{k}^3c_{k}\nonumber\\
    &=\sum_{\tau=k}^{K-1}\big(\textbf{I}_N \otimes \phi(\tau, k)\big)\big(\mathsf{P}_{\tau}^1\bm{\bar{\delta}}_{\tau}+\mathsf{P}_{\tau}^2\bm{\mu}_{\tau}^{\star}+\mathsf{P}_{\tau}^3c_{\tau}\big),\label{eq:VBetarecursive}
\end{align}
along with boundary condition $\bm{\beta}_{K}=\mathbf{0}$. Next using \eqref{eq:VBetarecursive} in \eqref{eq:VBarDeltarecursive1}, we obtain
\begin{align*}
     \bar{\Lambda}_k\bar{\bm{\delta}}_k&=-\bar{\mathbf{B}}_k^{\prime}\sum_{\tau=k+1}^{K-1}\big(\textbf{I}_N\otimes\phi(\tau, k+1)\big)\mathsf{P}_{\tau}^1\bm{\bar{\delta}}_{\tau}\\
     &\quad-\bar{\mathbf{B}}_k^{\prime}\sum_{\tau=k+1}^{K-1}\big(\textbf{I}_N\otimes\phi(\tau, k+1)\big)\mathsf{P}_{\tau}^2\bm{\mu}_{\tau}^{\star}+\mathsf{N}_k\bm{\mu}_k^{\star}\\
     &\quad-\bar{\mathbf{B}}_k^{\prime}\bar{\bm{\alpha}}_{k+1}c_k-\bar{\mathbf{B}}_k^{\prime}\sum_{\tau=k+1}^{K-1}\big(\textbf{I}_N\otimes\phi(\tau, k+1)\big)\mathsf{P}_{\tau}^3c_{\tau}.
\end{align*}
 Collecting all the terms for $k\in \K_l$ and using the definitions of $\mathsf{P}_\K^1$, $\mathsf{P}_\K^2$ and $\mathsf{P}_\K^3$, we obtain
\begin{align}
    \mathsf{P}_\K^1\bar{\bm{\delta}}_\K=\mathsf{P}_\K^2\bm{\mu}_{\K}^{\star}+\mathsf{P}_\K^3\bm{c}_{\K}.\label{eq:VBarDeltarecursive2}
\end{align}
We have the following assumption.
\begin{assumption}\label{ass:Matrixinvertivility}
    The matrices $\{\Lambda_k,~\bar{\Lambda}_k,~ k\in\K_l\}$ are invertible.
\end{assumption}
%%%%
 %
  In \eqref{eq:VBarDeltarecursive2}, $\mathsf{P}_\K^1$ is a upper triangular matrix (as $[\mathsf{P}_\K^1]_{k\tau}=\mathbf{0}$ for $\tau <k$) with $\bar{\Lambda}_k$, $k\in\K_l$ as the block diagonal elements.  From Assumption \ref{ass:Matrixinvertivility}, the matrix $\mathsf{P}_\K^1$ is also invertible. Then, from    \eqref{eq:VBarDeltarecursive2} we have
\begin{align}
    \bar{\bm{\delta}}_\K=(\mathsf{P}_\K^1)^{-1}\mathsf{P}_\K^2\bm{\mu}_{\K}^{\star}+(\mathsf{P}_\K^1)^{-1}\mathsf{P}_\K^3\bm{c}_{\K}.\label{eq:DeltaforallK}
\end{align}
The equilibrium state trajectory \eqref{eq:Eqstate}  is solved as follows
\begin{align*}
    &x_{k}^{\star}-\E[x_{k}^{\star}]=\mathsf{A}_{k-1}(x_{k-1}^{\star}-\E[x_{k-1}^{\star}])+\sigma_{k-1} w_{k-1}\\
    &=\psi(k, 0)(x_0-\E[x_0])+\sum_{\tau=1}^{k}\psi(k, \tau)\sigma_{\tau-1} w_{\tau-1},\\
    &\E[x_{k}^{\star}]= \bar{\mathsf{A}}_{k-1}\E[x_{k-1}^{\star}]+\bar{\mathsf{B}}_{k-1}\bar{\bm{\delta}}_{k-1}+c_{k-1}\nonumber\\
    &=\phi(k, 0)\E[x_{0}]+\sum_{\tau=1}^{k}\phi(k, \tau)\bar{\mathsf{B}}_{\tau-1}\bar{\bm{\delta}}_{\tau-1}+\sum_{\tau=1}^{k}\phi(k, \tau)c_{\tau-1}.
\end{align*}
We write the above equations for all $k\in\K_l$  compactly as follows
\begin{subequations}
\begin{align}
    &x_\K^{\star}-\E[x_\K^{\star}] = \bm{\Psi}_0(x_0-\E[x_0])+\bm{\Psi}_1\bm{w}_\K,\label{eq:noise}\\
    &\E[x_\K^{\star}] = \bm{\Phi}_0\E[x_0]+\bm{\Phi}_1\bar{\bm{\delta}}_\K+\bm{\Phi}_2\bm{c}_\K .\label{eq:Finalstate}
\end{align}
Also, for all $i\in\N$ the complementarity condition \eqref{eq:KKTcomp} is given by
\begin{align*}
    0\leq  &\bar{\mathsf{M}}_k\E[x_k^{\star}]+\bar{\mathsf{N}}_k\bar{\bm{\delta}}_k+\bm{p}_k \perp \bm{\mu}_k^{\star} \geq 0,
\end{align*}
 and compactly for all $k\in\K_l$ represented as
\begin{align}
    &0\leq \bar{\mathsf{M}}_\K\E[x_\K^{\star}]+\bar{\mathsf{N}}_\K\bar{\bm{\delta}}_\K+\bm{p}_\K \perp  \bm{\mu}_\K^{\star} \geq 0.\label{eq:vCC}
\end{align}
\end{subequations}
%%%%%%%
\begin{theorem}\label{th:MFTNElcp}
	Let  Assumptions \ref{ass:GameAssumption} and \ref{ass:Matrixinvertivility} hold. Then, the  MFTGNE strategy profile for MFTDG is given by 
	\begin{subequations}\label{eq:FinalMFT_NE}
    \begin{align}
        \bu_\K^{\star}-\E[\bu_\K^{\star}]&=\bm{\eta}_\K\bm{\Psi}_0(x_0-\E[x_0])+\bm{\eta}_\K\bm{\Psi}_1\bm{w}_\K,\label{eq:FinalUnoise}\\
    \E[\bu_\K^{\star}]&=\mathsf{F}\bm{\mu}_{\K}^{\star}+\mathsf{P},\label{eq:FinaluNE}
    \end{align}
\end{subequations}
	with $\bm{\mu}_{\K}^{\star}$ being the solution of the following single large-scale linear complementarity problem  
	\begin{align}
	\texttt{LCP}:\quad	0 \leq \mathsf{M}\bm{\mu}_{\K}^{\star}+\mathsf{Q} \perp \bm{\mu}_{\K}^{\star} \geq 0, \label{eq:FinalLCP}
	\end{align}
	where $\mathsf{F}=(\bm{\delta}_\K\bm{\Phi}_1+\mathbf{I})(\mathsf{P}_\K^1)^{-1}\mathsf{P}_\K^2$,  $\mathsf{P}=\bm{\delta}_\K\bm{\Phi}_0\E[x_0]+\big((\bm{\delta}_\K\bm{\Phi}_1+\mathbf{I})(\mathsf{P}_\K^1)^{-1}\mathsf{P}_\K^3+\bm{\delta}_\K\bm{\Phi}_2\big)\bm{c}_{\K}$,
    $\mathsf{M}=(\bar{\mathsf{M}}_\K\bm{\Phi}_1+\bar{\mathsf{N}}_\K)(\mathsf{P}_\K^1)^{-1}\mathsf{P}_\K^2$, $\mathsf{Q}=\bar{\mathsf{M}}_\K\bm{\Phi}_0\E[x_0]+\big((\bar{\mathsf{M}}_\K\bm{\Phi}_1+\bar{\mathsf{N}}_\K)(\mathsf{P}_\K^1)^{-1}\mathsf{P}_\K^3+\bar{\mathsf{M}}_\K\bm{\Phi}_2\big)\bm{c}_\K+\bm{p}_\K$.
\end{theorem}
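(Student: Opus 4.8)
The plan is to treat Theorem~\ref{th:MFTNElcp} as a purely constructive reduction rather than a fresh optimality argument: Theorem~\ref{thm:MFT_NE} already certifies that \emph{any} multiplier process $\{\mu_k^{i\star}\}$ satisfying the complementarity conditions \eqref{eq:KKTcomp} together with the associated recursions \eqref{eq:coefficients}--\eqref{eq:BackwardRecursive} yields a genuine MFTGNE through \eqref{eq:MFT_NE}. Hence it suffices to show that producing such a multiplier process is \emph{equivalent} to solving the single \texttt{LCP} \eqref{eq:FinalLCP}, and then to back-substitute to obtain the closed-form strategies \eqref{eq:FinalMFT_NE}. The structural fact driving everything (cf.\ Remarks~\ref{rem:parameterdependency} and \ref{rem:relaxedCond}) is that the gains $\eta_k^i,\delta_k^i$ and the coefficients $\alpha_k^i,\bar\alpha_k^i$---and therefore $\mathsf{A}_k,\bar{\mathsf{A}}_k$ and all aggregated matrices $\bm{\eta}_\K,\bm{\delta}_\K,\bm{\Psi}_0,\bm{\Psi}_1,\bm{\Phi}_0,\bm{\Phi}_1,\bm{\Phi}_2,\Lambda_k,\bar{\Lambda}_k,\mathsf{P}_\K^1,\mathsf{P}_\K^2,\mathsf{P}_\K^3,\bar{\mathsf{M}}_\K,\bar{\mathsf{N}}_\K$---depend only on the problem data, whereas $\bar\delta_k^i$ and $\beta_k^i$ (via \eqref{eq:bdeltaeq} and \eqref{eq:Betarecursive}) depend \emph{affinely} on the multipliers. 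This affine dependence is exactly what collapses the complementarity system into a linear complementarity problem rather than a nonlinear one.

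First I would eliminate the auxiliary variables. Solving the $\beta$-recursion \eqref{eq:Betarecursive} forward in closed form gives the aggregated identity \eqref{eq:VBetarecursive}; substituting this into the $\bar{\bm{\delta}}_\K$-equation \eqref{eq:VBarDeltarecursive1} and collecting stages produces \eqref{eq:VBarDeltarecursive2}, namely $\mathsf{P}_\K^1\bar{\bm{\delta}}_\K=\mathsf{P}_\K^2\bm{\mu}_\K^{\star}+\mathsf{P}_\K^3\bm{c}_\K$. Since $\mathsf{P}_\K^1$ is block upper triangular with the blocks $\bar{\Lambda}_k$ on its diagonal, Assumption~\ref{ass:Matrixinvertivility} makes it invertible, so I can solve \eqref{eq:DeltaforallK} to express $\bar{\bm{\delta}}_\K$ affinely in $\bm{\mu}_\K^{\star}$.

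Next I would propagate this into the mean state and into the complementarity condition. Substituting \eqref{eq:DeltaforallK} into the solved mean-state trajectory \eqref{eq:Finalstate} renders $\E[x_\K^{\star}]$ affine in $\bm{\mu}_\K^{\star}$; feeding both $\E[x_\K^{\star}]$ and $\bar{\bm{\delta}}_\K$ into the aggregated complementarity condition \eqref{eq:vCC} and separating the multiplier-linear part from the constant part yields precisely $\mathsf{M}$ and $\mathsf{Q}$, hence the \texttt{LCP} \eqref{eq:FinalLCP}. The strategy formulas then follow by back-substitution: plugging $\E[x_\K^{\star}]$ and $\bar{\bm{\delta}}_\K$ into the mean-control relation \eqref{eq:uforallK} gives \eqref{eq:FinaluNE} with the stated $\mathsf{F},\mathsf{P}$, while substituting the solved fluctuation state \eqref{eq:noise} into \eqref{eq:Unoise} gives the multiplier-free expression \eqref{eq:FinalUnoise}.

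I expect the main obstacle to be bookkeeping rather than any conceptual difficulty: I must verify that the per-stage recursions \eqref{eq:coefficients}--\eqref{eq:BackwardRecursive} aggregate consistently into the stated transition matrices $\psi,\phi,\bm{\Phi}_0,\bm{\Phi}_1,\bm{\Phi}_2,\bm{\Psi}_0,\bm{\Psi}_1$, and that the definitions of $\mathsf{P}_\K^1,\mathsf{P}_\K^2,\mathsf{P}_\K^3$ reproduce \eqref{eq:VBarDeltarecursive2} after telescoping the $\beta$-recursion---in particular, confirming the block upper-triangular structure of $\mathsf{P}_\K^1$ so that its invertibility genuinely reduces to the invertibility of the diagonal blocks $\bar{\Lambda}_k$ granted by Assumption~\ref{ass:Matrixinvertivility}. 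The one point requiring care beyond algebra is the logical direction: a solution $\bm{\mu}_\K^{\star}\geq 0$ of \eqref{eq:FinalLCP} must be shown to reconstruct, via \eqref{eq:DeltaforallK} and the recursions, a multiplier process satisfying \eqref{eq:KKTcomp}, so that Theorem~\ref{thm:MFT_NE} applies and the reconstructed strategies are certified to be an MFTGNE.
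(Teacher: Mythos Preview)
Your proposal is correct and follows essentially the same approach as the paper: the paper's proof is extremely terse because the aggregated identities \eqref{eq:VBarDeltarecursive2}, \eqref{eq:DeltaforallK}, \eqref{eq:noise}, \eqref{eq:Finalstate}, \eqref{eq:vCC} are all derived in the preamble of Section~\ref{sec:Solvability}, and the proof itself consists of exactly the substitutions you describe (put \eqref{eq:noise} into \eqref{eq:Unoise}; put \eqref{eq:Finalstate} into \eqref{eq:uforallK} and \eqref{eq:vCC}; then use \eqref{eq:DeltaforallK} to eliminate $\bar{\bm{\delta}}_\K$). Your additional remarks about the block upper-triangular structure of $\mathsf{P}_\K^1$ and the logical direction back to Theorem~\ref{thm:MFT_NE} are exactly the points the paper handles implicitly via Assumption~\ref{ass:Matrixinvertivility} and Remark~\ref{rem:multipleNash}.
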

%%%
\begin{proof}
      Substituting \eqref{eq:noise} in \eqref{eq:Unoise} results in \eqref{eq:FinalUnoise}. Using \eqref{eq:Finalstate} in \eqref{eq:uforallK} and \eqref{eq:vCC}  we get
     \begin{align*}
         &\E[\bu_\K^{\star}]=(\bm{\delta}_\K\bm{\Phi}_1+\mathbf{I})\bm{\bar{\delta}}_\K+\bm{\delta}_\K(\bm{\Phi}_0\E[x_0]+\bm{\Phi}_2\bm{c}_\K),\\
         &0\leq (\bar{\mathsf{M}}_\K\bm{\Phi}_1+\bar{\mathsf{N}}_\K)\bar{\bm{\delta}}_\K+\bar{\mathsf{M}}_\K\bm{\Phi}_0\E[x_0]+\bar{\mathsf{M}}_\K\bm{\Phi}_2\bm{c}_\K+\bm{p}_\K \perp  \bm{\mu}_\K^{\star} \geq 0.
     \end{align*}
     Finally, using the expression for $\bm{\bar{\delta}}_\K$  from \eqref{eq:DeltaforallK} in the above equations we obtain \eqref{eq:FinaluNE} and \eqref{eq:FinalLCP}, respectively. 
\end{proof}
 \begin{remark}\label{rem:matrixinv}
   We note that equations \eqref{eq:lambdavect} are  a matrix representation of the algebraic equations \eqref{eq:coefficients} at stage $k\in \K_l$, with $\Lambda_k=\mathbf{R}_k+\mathbf{B}_k^{\prime}\bm{\alpha}_{k+1}\mathsf{B}_k$ and $\bar{\Lambda}_k=\bar{\mathbf{R}}_k+\bar{\mathbf{B}}_k^{\prime}\bar{\bm{\alpha}}_{k+1}\bar{\mathsf{B}}_k$. Following the recursive procedure outlined in Remarks \ref{rem:parameterdependency} and \ref{rem:relaxedCond}, the invertibilty of these matrices at every stage $k\in \K_l$, as required by Assumption \ref{ass:Matrixinvertivility}, 
 	can be verified using the problem data without the need for solving the \texttt{LCP}. 
\end{remark}
\begin{remark}\label{rem:multipleNash} We note that the \texttt{LCP} given by \eqref{eq:FinalLCP} is an implicit representation of \eqref{eq:KKTcomp}-\eqref{eq:Eqstate}. Further, if the 	\texttt{LCP} has multiple solutions, then, from  Theorem \ref{thm:MFT_NE},  each one of these solutions   constitutes a MFTGNE. The existence conditions and numerical methods for the \texttt{LCP}  have been extensively studied in the optimization community; refer to \cite{Pang1:09} for  details.
\end{remark}
%
 
%%%%%%%%%%%%%%%%

\section{Numerical Illustration}\label{sec:Numerical}
\begin{figure}[h]
	\centering
	\includegraphics[scale=0.8]{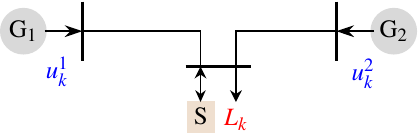}
	\caption{A microgrid   with two generators and one storage unit}
	\label{fig:Fig1}
\end{figure} 
In recent years, game theory has been extensively used to analyze energy storage issues that arise in microgrid management; see \cite{Chen:16} and \cite{Gomez:19}. Motivated by these studies, we consider a simplified microgrid model, as shown in Fig. \ref{fig:Fig1}. The model comprises two generators, $\mathrm{G_1}$ and $\mathrm{G_2}$, with generation levels $u_k^1$ and $u_k^2$, respectively. These generators supply power to a time-varying load  represented by $L_k$, through transmission lines. A storage unit, $\mathrm{S}$, is installed near the load, which can either store surplus generator output (through charging) or supply the load (through discharging) when demand is not met.
Let $x_k$ denote the storage level at time instant $k$, and its evolution due to charging and discharging be given by $
    x_{k+1}=a x_k+\sum_{i=1}^{2}b ^iu_k^i-L_k+\sigma  w_k$,
where $a \in (0,1)$ and $b^1, b^2 \in (0,1)$ account for the natural storage depreciation and  transmission line losses respectively. We assume that the uncertainties in power generation and storage device operation are modeled by the disturbance process $w_k$,~$k\in \K$.  
We consider the following constraints
\begin{subequations}
    \begin{align}
        & \text{Storage: } \underline{x} \leq \E[x_{k+1}]= a~ \E[x_k]+\sum_{i=1}^{2}b ^i~\E[u_k^i]-L_k ,\label{eq:CoupleConst}\\
		&\text{Generation: }  \underline{u}^{i}\leq \E[u_k^i] \leq \bar{u}^i,~i=1,2\label{eq:PrivateConst}.
    \end{align}
\end{subequations}
%%%%%%%
The mixed (coupled) constraint \eqref{eq:CoupleConst} indicates the reserve level of the storage unit, that is, the mean storage level cannot go below $\underline{x}$.
Further, \eqref{eq:PrivateConst} represent the operational constraints of the generators, that is, the mean/expected production level $\E{[u_k^i]}$ of each generator $i=1,2$ cannot go above  $\bar{u}^i$ and below $\underline{u}^i$. The generating units seek to minimize their production costs which are proportional to their generation levels. Further, they try to minimize variance in their generation levels. The generating units  wish not to have high storage levels when they are able to meet the demand, and also wish to reduce the variance of the storage level. We assume there are no terminal costs. So, the cost functional of each generating unit $i=1,2$ is given as 
$J^i =\tfrac{1}{2}\sum_{k\in\K_l}\big(q_k^i\E[(x_k-\E[x_k])^2]+(q_k^i+\bar{q}_k^i)\E[x_k]^2\big)  +\tfrac{1}{2}\sum_{k\in\K_l}\big(r_k^{ii}\E[(u_k^i-\E[u_k^i])^2]+(r^{ii}_k+\bar{r}^{ii}_k)\E[u_k^i]^2\big)$.
 \begin{figure}[h]  
	%%  Plot1
\subfloat[]{\begin{tikzpicture}[scale=0.5,>=latex']
  		\tikzset{every pin/.append style={font=\large}}
  	\begin{groupplot}[
  		group style={
  			group size=1 by 2,
  			horizontal sep=1.25cm,
  		},normalsize]
  		\nextgroupplot[
  		width=0.475\textwidth,
  		height=0.52\textwidth,
  		xmin=0,xmax=140,ymin=-0, ymax=11, xlabel = {time [h] },ylabel={Load and generator production levels},legend pos = north west,
  		grid=both
  		]

		\addplot[color=Red!100, line width=1.0pt, dashed,  very thick] table{DataGSnew/Eu1.dat}; \addlegendentry{$\E[u_k^1]$};
		\addplot[color=RoyalBlue, line width=1.0pt, solid, thin ] table{DataGSnew/u1.dat}; \addlegendentry{$u_k^1$};
  \addplot[color=magenta!100, line width=1.0pt, dashed, very  thick] table{DataGSnew/Eu2.dat}; \addlegendentry{$\E[u_k^2]$};
		\addplot[color=Green, line width=1.0pt, solid, thin ] table{DataGSnew/u2.dat}; \addlegendentry{$u_k^2$};
		  		\addplot[color= RedOrange , line width=1.0pt, solid , thin] table{DataGSnew/L.dat}; \addlegendentry{$L_k$}; 
  \draw[-,black!50,dashed,thick](0,4.5) to (140,4.5)node[xshift=-1cm,above,black]{\large{$\bar{u}^{1}$}};
  \draw[-,black!50,dashed,thick](0,7) to (140,7)node[xshift=-1cm,above,black]{\large{$\bar{u}^{2}$}};
  \draw[-,black!50,dashed,thick](0,1.5) to (140,1.5)node[xshift=-3.5cm,above,black]{\large{$\underline{u}^{1}$}};
  \draw[-,black!50,dashed,thick](0,0.5) to (140,0.5)node[xshift=-3.5cm, yshift=-0.7,above,black]{\large{$\underline{u}^{2}$}};
\end{groupplot}
    \end{tikzpicture}
\label{fig:fig2a}}
%%  Plot3
\subfloat[]{\begin{tikzpicture}[scale=0.5,>=latex']
	    \tikzset{every pin/.append style={font=\large}}
	    \begin{groupplot}[
	    	group style={
	    		group size=1 by 2,
	    		horizontal sep=1.25cm,
	    	},normalsize]
	    	\nextgroupplot[
	    	width=0.475\textwidth,
	    	height=0.375  \textwidth,
	    	xmin=0,xmax=140,ymin=-1.5, ymax=7, 
	    	ylabel={Storage level },legend pos = north east,grid=both, 
	    	legend style={at={(0.3,0.8)},anchor=west}
	    	]
 
		\addplot[color=Red!100, line width=1.0pt, dashed, very thick] 
            table{DataGSnew/Ex.dat}; \addlegendentry{$\E[x_k]$};
		\addplot[color=RoyalBlue, line width=1.0pt, solid, thin] 
            table{DataGSnew/x.dat}; \addlegendentry{$x_k$};
        \addplot[color=Green, line width=1.0pt, solid, thick] 
            table{DataGSnew/state_diff.dat}; \addlegendentry{$\E[x_{k+1}]-\E[x_k]$};
            \draw[-,black!50,dashed,thick](0,1.5) to (140,1.5)node[xshift=-5.5cm,above,black]{\large{$\underline{x}$}};
  	\nextgroupplot[
  width=0.475\textwidth,
  height=0.175\textwidth,
 xmin=0,xmax=140,ymin=-4, ymax=4, xlabel = {time [h] },ylabel={Disturbance},
 grid=both, legend style={at={(0.7,0.9)},anchor=west}, 
 grid style={line width=.1pt, draw=gray!25}
  ]
  	\addplot[color=RoyalBlue, line width=1.0pt, solid, thin] table{DataGSnew/noise.dat}; \addlegendentry{$w_k$};
\end{groupplot}
      \end{tikzpicture}
  \label{fig:fig2b}
%%  Plot4
 }
	\caption{Panel (a) depicts time varying load and generator outputs, and panel (b) depicts the battery storage level  and disturbance signal.} 
	\label{fig:Fig2}
\end{figure}
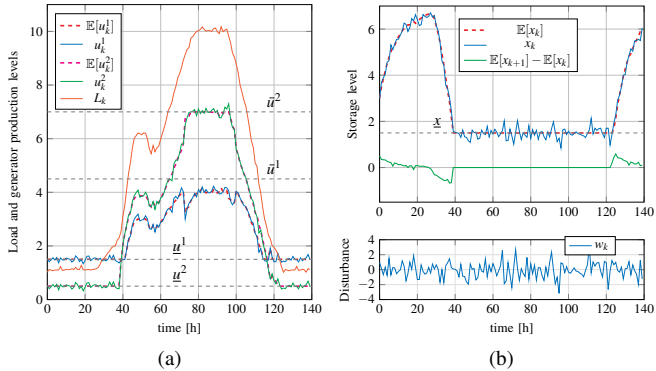

For numerical illustration, we consider the following parameter values: $r_k^{ii}=0.5$, $\bar{r}_k^{ii}=2.5$, $q_k^{i}=3.5$, $\bar{q}_k^{i}=0.5$, $i=1,2, ~k\in \K_l$, $a=0.9$, $b^1=0.90$, $b^2=0.94$, $\underline{x}=1.5$,  $\underline{u}^1=1.5$, $\underline{u}^2=0.5$, $\bar{u}^1=4.5$, $\bar{u}^2=7$, $K=140$,  $\sigma =0.2$, $x_0=3$ (deterministic). We consider the disturbance signal  $w_k,~k\in \K_l$ to be a white Gaussian noise process. For the chosen parameter values, we note that the conditions required in Assumption \ref{ass:Matrixinvertivility} are satisfied. We used the freely
available PATH solver (available at 
\url{https://pages.cs.wisc.edu/~ferris/path.html}) for solving the \texttt{LCP} \eqref{eq:FinalLCP}.
Fig. \ref{fig:fig2a} illustrates 
the time varying load  and   generator production levels. 
We observe that the generators vary their production levels while satisfying the 
generation constraints \eqref{eq:PrivateConst}. Fig. \ref{fig:fig2b} illustrates 
the battery storage levels and the disturbance signal. In particular, we observe that when $\E[x_{k+1}]-\E[x_k] = (a-1)\E[x_k]+b^1\E[u_k^1]+b^2\E[u_k^2]-L_k <0$, the storage unit discharges towards meeting the demand and thereby reaches its reserve level $\underline{x}$, satisfying the mixed coupled constraint \eqref{eq:CoupleConst}.
%%%%
%%%%%%
\section{Conclusion}\label{sec:Conclusion}
We have characterized the solution for a class of linear-quadratic MFTDGs with coupled affine inequality constraints. This involves a multiplier process satisfying implicit complementarity conditions. By reformulating these conditions as a single large-scale linear complementarity problem, we enable computation of these solutions. A numerical example has illustrated our proposed approach. In future work, we aim to generalize the constraint structure to include state and control terms, alongside their mean terms. Further, we also plan to explore the problem in the continuous-time formulation.
%%%%%%%%
%%%%%%%
\bibliographystyle{IEEEtran}
\bibliography{main} 
\newpage
%%%%%%%%%%%%%%%%

%%%%%%% 
 \end{document}